\newtheorem{theorem}{Theorem}
\newcommand*{\cd}{(\cdot)}
\newcommand*{\lt}{L_2(\mathbb R)}
\newcommand*{\Wr}{W_2^r(\mathbb R)}
\newcommand*{\iR}{\int_{\mathbb R}}
\newcommand*{\ov}{\overline}
\newcommand*{\wx}{\widehat x}
\DeclareMathOperator*{\vraisup}{vraisup}
\DeclareMathOperator*{\RE}{Re}
\begin{document}
\begin{frontmatter}
\title{Optimal recovery of linear operators from information of random functions}
\author{K.~Yu.~Osipenko}%\fnref{myfootnote1}}
%\cortext[cor]{Corresponding author}
%\fntext[myfootnote1]
\address{Moscow State University,\\
Institute for Information Transmission Problems,
Russian Academy of Sciences, Moscow}
\ead{kosipenko@yahoo.com}
\begin{abstract}
The paper concerns problems of the recovery of linear operators defined on sets of functions from information of these functions given with stochastic errors. The constructed optimal recovery methods, in general, do not use all the available information. As a consequence, optimal methods are obtained for recovering derivatives of functions from Sobolev classes by the information of their Fourier transforms given with stochastic errors. A similar problem is considered for solutions of the heat equation.
\end{abstract}

\begin{keyword}
optimal recovery \sep random functions \sep Fourier transform
\MSC[2010] 41A65 \sep 41A46 \sep 49N30 \sep 60G35
\end{keyword}

\end{frontmatter}

\section{Introduction}

There are several approaches to recovery problems from inaccurate information. One of them concerns the case when the error in the initial data is deterministic. Quite a lot of works are devoted to this case. The main results can be found in \cite{MT2}, \cite{MR}, \cite{Os1}, \cite{Pl}, \cite{TW} and the literature cited there.

Another approach is related to the fact that the initial information is considered to be given with a random error. There are also many papers dedicated to this topic. The following are the closest to the setting under consideration: \cite{Do}, \cite{Do1}, \cite{Wi}, \cite{Wi1}, \cite{Re}, \cite{Kr}. A distinctive specificity of this paper is that the information used here is not random vectors, but random functions.

\section{General setting}

Denote by $\mathcal W$ the set of functions $x\cd\in\lt$ for which
$$\iR\nu(t)|x(t)|^2\,dt<\infty,$$
where $\nu\cd$ is continuous and positive almost everywhere. Put
$$W=\biggl\{\,x\cd\in\mathcal W:\iR\nu(t)|x(t)|^2\,dt\le1\,\biggr\}.$$
Consider the problem of optimal recovery of the operator $\Lambda x\cd=\mu\cd x\cd$ on the class $W$ by functions $x\cd$ given with random errors (we assume that $\mu\cd$ is continuous and such that $\Lambda$ maps $\mathcal W$ into $\lt$). More precisely, for a fixed $\delta>0$ and every $x\cd\in W$ we consider the set of random functions
$$Y_\delta(x\cd)=\{\,y_\xi\cd\in\lt:\mathbb My_\xi\cd=x\cd,\ \mathbb Dy_\xi\cd\le\delta\,\}.$$
We will also assume that the set of these random functions and the corresponding probability measures are such that they allow for a change in integration so that the equalities are valid
\begin{equation}\label{eq1}
\mathbb M\iR p(t)y_\xi(t)\,dt=\iR p(t)\mathbb My_\xi(t)\,dt,\quad p(t)\in\lt,
\end{equation}
and
\begin{equation}\label{eq2}
\mathbb M\iR|y_\xi(t)|^2\,dt=\iR\mathbb M|y_\xi(t)|^2\,dt.
\end{equation}

As recovery methods we consider all possible mappings $\varphi\colon\lt\to\lt$.
The error of a~method~$m$ is defined as
$$e(\Lambda,W,\delta,\varphi)
=\left(\sup_{\substack{x\cd\in W\\y_\xi\cd\in Y_\delta(x\cd)}}\mathbb M\left(\|\Lambda x\cd-\varphi(y_\xi\cd)\cd\|^2_{\lt}\right)\right)^{1/2}.$$
The problem is to find the error of optimal recovery
\begin{equation}\label{bas}
E(\Lambda,W,\delta)=\inf_{\varphi\colon\lt\to\lt}e(\Lambda,W,\delta,\varphi)
\end{equation}
and a method on which this infimum is attained which is called optimal.

We assume that $|\mu\cd|$ and $\nu\cd$ are even functions, $|\mu(t)|>0$ almost everywhere, and $|\mu\cd|/\sqrt{\nu\cd}$ is a monotonically decreasing function on $\mathbb R_+=[0,+\infty)$. Put
\begin{equation}\label{fs}
f(s)=\int_{|t|\le s}\left(\frac{\sqrt{\nu(s)}}{|\mu(s)|}\frac{|\mu(t)|}{\sqrt{\nu(t)}}-
1\right)\nu(t)\,dt.
\end{equation}
It is easy to check that $f\cd$ is a monotonically increasing function. Assume that $f(s)\to+\infty$ as $s\to+\infty$. Then for any $\delta>0$ the equation $f(s)=\delta^{-2}$ has a unique solution $t_\delta$.

\begin{theorem}\label{T1}
For all $\delta>0$
\begin{equation}\label{TE}
E(\Lambda,W,\delta)=\delta\biggl(\int_{|t|\le t_\delta}|\mu(t)|^2\biggl(1-\dfrac{\sqrt{\nu(t)}}{|\mu(t)|}\frac{|\mu(t_\delta)|}
{\sqrt{\nu(t_\delta)}}\biggr)\,dt\biggr)^{1/2}.
\end{equation}
Moreover, the method
$$\varphi(y_\xi\cd)(t)=\left(1-\dfrac{\sqrt{\nu(t)}}{|\mu(t)|}\frac{|\mu(t_\delta)|}{\sqrt{\nu(t_\delta)}}
\right)_+\mu(t)y_\xi(t)$$
is optimal $(a_+=\max\{a,0\})$.
\end{theorem}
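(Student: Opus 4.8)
The standard two-part strategy for optimal recovery problems applies: first establish a lower bound for $E(\Lambda,W,\delta)$, then exhibit a method whose error matches it. For the lower bound I would use the fact that any recovery method must in particular do well on the ``hardest'' inputs, exploiting the stochastic nature of the error. Concretely, take $x\cd\equiv 0$ (which lies in $W$) and consider random functions $y_\xi\cd$ with $\mathbb My_\xi\cd=0$ and variance budget $\mathbb Dy_\xi\cd\le\delta$ concentrated on the frequency band $|t|\le t_\delta$; for a deterministic input $0$ the best one can do against such noise forces the error from below. Simultaneously one perturbs by a deterministic element $z\cd\in W$ supported where the multiplier $(1-\tfrac{\sqrt{\nu(t)}}{|\mu(t)|}\tfrac{|\mu(t_\delta)|}{\sqrt{\nu(t_\delta)}})_+$ is active, so that $\Lambda z\cd$ is not seen well. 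Balancing the deterministic perturbation (constrained by $\iR\nu|z|^2\le 1$) against the stochastic fluctuation (constrained by $\delta$) is exactly what produces the threshold $t_\delta$ as the solution of $f(s)=\delta^{-2}$, with $f$ as in \eqref{fs}; the two constraints meet with equality there. This yields $E(\Lambda,W,\delta)\ge$ the right-hand side of \eqref{TE}.

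For the upper bound I would simply compute the error of the proposed method
$$\varphi(y_\xi\cd)(t)=\left(1-\dfrac{\sqrt{\nu(t)}}{|\mu(t)|}\frac{|\mu(t_\delta)|}{\sqrt{\nu(t_\delta)}}\right)_+\mu(t)y_\xi(t).$$
Writing $y_\xi(t)=x(t)+(y_\xi(t)-x(t))$ and abbreviating the multiplier by $a(t)$ with $0\le a(t)\le 1$, one has
$$\Lambda x(t)-\varphi(y_\xi\cd)(t)=(1-a(t))\mu(t)x(t)-a(t)\mu(t)(y_\xi(t)-x(t)).$$
The two terms are handled separately: by \eqref{eq1}--\eqref{eq2} the cross term vanishes in expectation (since $\mathbb M(y_\xi(t)-x(t))=0$), so
$$\mathbb M\|\Lambda x\cd-\varphi(y_\xi\cd)\cd\|_{\lt}^2=\iR(1-a(t))^2|\mu(t)|^2|x(t)|^2\,dt+\iR a(t)^2|\mu(t)|^2\,\mathbb M|y_\xi(t)-x(t)|^2\,dt.$$
The second integral is bounded by $\delta^2$ times $\sup_t a(t)^2|\mu(t)|^2$... but that is too crude; instead one keeps it as $\le\delta^2\vraisup_t(a(t)|\mu(t)|)^2$ only after first noting where $a$ is supported. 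The key algebraic point is that on $|t|\le t_\delta$ one has $a(t)|\mu(t)|^2=|\mu(t)|^2-\sqrt{\nu(t)}\,|\mu(t)|\,|\mu(t_\delta)|/\sqrt{\nu(t_\delta)}$, and on $|t|>t_\delta$ the multiplier vanishes by monotonicity of $|\mu\cd|/\sqrt{\nu\cd}$. One then combines the variance term (bounded using $\mathbb Dy_\xi\cd\le\delta$, i.e. $\iR\mathbb M|y_\xi(t)-x(t)|^2\,dt\le\delta^2$, together with $a(t)^2|\mu(t)|^2\le|\mu(t_\delta)|^2/\nu(t_\delta)\cdot\nu(t)$ is not constant, so one uses instead $a(t)|\mu(t)|^2\le |\mu(t_\delta)|^2\nu(t)/(\nu(t_\delta))\cdot(\ldots)$) — the honest way is a Lagrange-multiplier / Cauchy--Schwarz argument showing
$$\iR(1-a(t))^2|\mu(t)|^2|x(t)|^2\,dt+\iR a(t)^2|\mu(t)|^2\mathbb M|y_\xi(t)-x(t)|^2\,dt\le\frac{|\mu(t_\delta)|^2}{\nu(t_\delta)}\left(\iR\nu(t)|x(t)|^2\,dt\right)+\delta^2\iR a(t)|\mu(t)|^2\,dt$$
after using that $(1-a(t))|\mu(t)|^2\le |\mu(t_\delta)|\sqrt{\nu(t)}\,|\mu(t)|/\sqrt{\nu(t_\delta)}$ on the support and $(1-a)^2\le 1-a$ there. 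Plugging in $\iR\nu|x|^2\le 1$ and the defining relation $f(t_\delta)=\delta^{-2}$, which says precisely $\int_{|t|\le t_\delta}(\tfrac{\sqrt{\nu(t_\delta)}}{|\mu(t_\delta)|}\tfrac{|\mu(t)|}{\sqrt{\nu(t)}}-1)\nu(t)\,dt=\delta^{-2}$, makes the bound collapse to exactly $\delta^2\int_{|t|\le t_\delta}|\mu(t)|^2(1-a(t))\,dt$, which is the square of the right-hand side of \eqref{TE}.

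The main obstacle I anticipate is getting the constants in the upper-bound estimate to line up exactly rather than up to a factor — in particular, the right pointwise bookkeeping of $a(t)|\mu(t)|^2$ against $\nu(t)$ on the band $|t|\le t_\delta$, and recognizing that the worst-case deterministic $x\cd$ in the lower bound is a spike (a normalized approximate eigenfunction concentrated near the threshold frequency $t_\delta$) while the worst-case noise is spread across the whole band. The monotonicity of $|\mu\cd|/\sqrt{\nu\cd}$ is what guarantees the multiplier is genuinely of the stated thresholded form (one connected band), and the hypothesis $f(s)\to\infty$ is what makes $t_\delta$ well-defined; both must be invoked at the right moments. Verifying the interchange-of-integration hypotheses \eqref{eq1}--\eqref{eq2} are exactly what is needed to kill the cross term is routine but should be stated explicitly.
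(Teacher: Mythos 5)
Your upper bound follows essentially the same route as the paper: restrict to linear multiplier methods $\varphi(y_\xi\cd)\cd=\alpha\cd\mu\cd y_\xi\cd$, kill the cross term via \eqref{eq1}--\eqref{eq2}, bound the bias term by $\vraisup_t\bigl(|\mu(t)|^2|1-\alpha(t)|^2/\nu(t)\bigr)=|\mu(t_\delta)|^2/\nu(t_\delta)$ using the monotonicity of $|\mu\cd|/\sqrt{\nu\cd}$, and then collapse everything with the identity $f(t_\delta)=\delta^{-2}$. Your bookkeeping of the variance term is muddled (you oscillate between $a$, $a^2$, and various pointwise bounds, and you concede you cannot make the constants line up), but the skeleton is correct and the paper's computation shows it closes exactly: one keeps $\delta^2\int_{|t|\le t_\delta}|\mu|^2\alpha^2\,dt$, expands $\alpha^2=\alpha-(1-\alpha)\alpha$, and the leftover terms cancel precisely because of the definition of $t_\delta$. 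No genuinely different idea here.

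The lower bound, however, has a real gap. First, a structural one: you never construct the adversarial randomization. A single two-point (or $x\equiv0$ plus one perturbation $z\cd$) argument with a mean-zero noise can only yield a bound of the form $p\,\|\Lambda z\cd\|^2$ with a \emph{single} scalar $p\le\delta^2/(\delta^2+\sup_t|z(t)|^2)$ coming from the variance budget, which is strictly weaker than what is needed. The required bound is the functional $\int_{\mathbb R}\frac{\delta^2}{\delta^2+|x(t)|^2}|\mu(t)|^2|x(t)|^2\,dt$ with the weight $\delta^2/(\delta^2+|x(t)|^2)$ applied \emph{pointwise in $t$}, and obtaining it is the heart of the proof. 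The paper achieves this by discretizing $[-A,A]$ into $2N$ intervals, taking the finite family $B$ of $\pm\tau_j$ sign patterns, and attaching to each $x\cd\in B$ a nested discrete distribution that reveals the signs $s_1(x),\dots,s_{j-1}(x)$ with probability $p_j-p_{j-1}$, $p_j=\delta^2/(\delta^2+\tau_j^2)$; averaging over $B$ and using the $z\mapsto-z$ symmetry within each $B_{s_1,\dots,s_{j-1}}$ produces $\sum_j p_j\mu_j\tau_j^2$, and a limiting argument turns this into the integral functional. None of this is recoverable from your sketch. Second, your identification of the extremizer is wrong: the worst-case deterministic input is \emph{not} a spike near $t_\delta$ (a normalized spike makes $\frac{\delta^2}{\delta^2+|x|^2}|\mu|^2|x|^2$ integrate to something tending to $0$ as the spike narrows); it is the function $\wx(t)=\delta\bigl(\bigl(\frac{\sqrt{\nu(t_\delta)}}{|\mu(t_\delta)|}\frac{|\mu(t)|}{\sqrt{\nu(t)}}-1\bigr)_+\bigr)^{1/2}$, spread over the entire band $|t|\le t_\delta$, and it is the noise and the signal together that occupy the band. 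As written, your lower-bound plan would not produce the right-hand side of \eqref{TE}.
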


\begin{proof}
1. The lower bound. Consider the segment $[-A,A]\subset\mathbb R$. Let us divide it on $2N$ parts by the points
$x_j=\pm j\dfrac AN$, $j=0,1,\ldots,N$. Set $a=(a_1,\ldots,a_{2N})$,
$$x_a(t)=\begin{cases}a_{2j-1},&t\in[x_{j-1},x_j),\ j=1,\ldots N,\\
a_{2N-1},&t=A,\\
a_{2j},&t\in[x_{-j},x_{-j+1}),\ j=1,\ldots,N,\\
0,&|t|>A.\end{cases}$$

Let $\tau=(\tau_1,\ldots,\tau_{2N})$, $\tau_1\ge\tau_2\ge\ldots\ge\tau_{2N}>0$, and $x_\tau\cd\in W$. Put
$$B=\{\,x_a\cd\in W:a_j=\pm\tau_j,\quad j=1,\ldots,2N\,\}.$$
It is obvious that $B\subset W$. Set
$$p_j=\frac{\delta^2}{\delta^2+\tau_j^2},\quad j=1,\ldots,2n.$$
Due to the monotony conditions of $\tau_j$, we have
$$0<p_1\le\ldots\le p_{2n}<1.$$
Any $x\cd\in B$ can be written in the form
$$x\cd=\sum_{j=1}^{2n}s_j(x)\tau_je_j\cd,$$
where $s_j(x)\in\{-1,1\}$, and
$$e_{2j-1}\cd=\chi_{[x_{j-1},x_j)}\cd,\quad e_{2j}\cd=\chi_{[x_{-j},x_{-j+1})}\cd,\quad j=1,\ldots,N$$
($\chi_\Omega\cd$ is the characteristic function of the set $\Omega$).

For each $x\cd\in B$ we define the distribution $\eta(x)\cd$ in the following way:
$$\eta(x)\cd=\begin{cases}0,&\mbox{with probability }p_1,\\
\dfrac{s_1(x)\tau_1}{1-p_1}e_1\cd&\mbox{with probability }p_2-p_1,\\
\displaystyle\sum_{j=1}^2\dfrac{s_j(x)\tau_j}{1-p_j}e_j\cd,&\mbox{with probability }p_3-p_2,\\
\dotfill&\dotfill\\
\displaystyle\sum_{j=1}^{2n-1}\dfrac{s_j(x)\tau_j}{1-p_j}e_j\cd,&\mbox{with probability }p_{2n}-p_{2n-1},\\
\displaystyle\sum_{j=1}^{2n}\dfrac{s_j(x)\tau_j}{1-p_j}e_j\cd,&\mbox{with probability }1-p_{2n}.
\end{cases}$$
Thus, we have the following distribution for $\eta(x)(t)$ if $t\in[x_{j-1},x_j)$:
\begin{multline*}
\eta(x)(t)=\begin{cases}
0,&\mbox{with probability }p_{2j-1},\\
\dfrac{s_{2j-1}(x)\tau_{2j-1}}{1-p_{2j-1}},&\mbox{with probability }1-p_{2j-1},
\end{cases}\\j=1,\ldots,N,
\end{multline*}
and if $t\in[x_{-j},x_{-j+1})$:
%\begin{multline*}
$$\eta(x)(t)=\begin{cases}
0,&\mbox{with probability }p_{2j},\\
\dfrac{s_{2j}(x)\tau_{2j}}{1-p_{2j}},&\mbox{with probability }1-p_{2j},
\end{cases}\quad j=1,\ldots,N.$$
%\end{multline*}
It is easy to verify that $\mathbb M(\eta(x)\cd)\cd=x\cd$. Moreover,
$$\mathbb D(\eta(x)\cd)(t)=\begin{cases}\delta^2,&t\in[A,A],\\
0,&|t|>A.\end{cases}$$
Consequently, $\eta(x)\cd\in Y_\delta(x\cd)$ for all $x\cd\in B$.

Let $\varphi$ be an arbitrary recovery method. Taking into account that the set $B$ is finite (with $2^{2N}$ elements), we have
\begin{multline}\label{ver}
e^2(\Lambda,W,\delta,\varphi)\ge\sup_{x\cd\in B}\mathbb M\|\Lambda x\cd-\varphi(\eta(x)\cd)\cd\|_{\lt}^2\\
=\sup_{x\cd\in B}\biggl(\sum_{j=1}^{2N+1}(p_j-p_{j-1})\biggl\|\Lambda x\cd-\varphi\biggl(\sum_{k=1}^{j-1}
\dfrac{s_k(x)\tau_k}{1-p_k}e_k\cd\biggr)\cd\biggr\|_{\lt}^2\biggr)\\
\ge\frac1{2^{2N}}\sum_{x\cd\in B}\biggl(\sum_{j=1}^{2N+1}(p_j-p_{j-1})\biggl\|\Lambda x\cd-\varphi\biggl(\sum_{k=1}^{j-1}
\dfrac{s_k(x)\tau_k}{1-p_k}e_k\cd\biggr)\cd\biggr\|_{\lt}^2\biggr)\\
=\frac1{2^{2N}}\sum_{j=1}^{2N+1}(p_j-p_{j-1})\sum_{x\cd\in B}\biggl\|\Lambda x\cd-\varphi\biggl(\sum_{k=1}^{j-1}
\dfrac{s_k(x)\tau_k}{1-p_k}e_k\cd\biggr)\cd\biggr\|_{\lt}^2;
\end{multline}
here $p_0=0$ and $p_{2N+1}=1$. Set
$$B_{s_1,\ldots,s_{j-1}}=\{\,x\cd\in B:s_1(x)=s_1,\ldots,s_{j-1}(x)=s_{j-1}\,\},$$
$j=1,\ldots,2N+1$ (for $j=1$ this set coincides with $B$). Then
\begin{multline*}
\frac{p_j-p_{j-1}}{2^{2N}}\sum_{x\cd\in B}\biggl\|\Lambda x\cd-\varphi\biggl(\sum_{k=1}^{j-1}
\dfrac{s_k(x)\tau_k}{1-p_k}e_k\cd\biggr)\cd\biggr\|_{\lt}^2
=\frac{p_j-p_{j-1}}{2^{2N}}\\\times\sum_{s_1,\ldots,s_{j-1}}\,\,\sum_{x\in B_{s_1,\ldots,s_{j-1}}}\biggl\|\Lambda x\cd
-\varphi\biggl(\sum_{k=1}^{j-1}
\dfrac{s_k\tau_k}{1-p_k}e_k\cd\biggr)\cd\biggr\|_{\lt}^2.
\end{multline*}
If $x\cd\in B_{s_1,\ldots,s_{j-1}}$, then
$$x\cd=\sum_{k=1}^{j-1}s_k\tau_ke_k\cd+z(x)\cd,\quad z(x)\cd=\sum_{k=j}^{2N}s_k(x)\tau_ke_k\cd.$$
Moreover, with every element
$$\sum_{k=1}^{j-1}s_k\tau_ke_k\cd+z(x)\cd\in B_{s_1,\ldots,s_{j-1}}$$
the set $B_{s_1,\ldots,s_{j-1}}$ contains the element
$$\sum_{k=1}^{j-1}s_k\tau_ke_k\cd-z(x)\cd.$$
Thus,
\begin{multline*}
\frac{p_j-p_{j-1}}{2^{2N}}\sum_{s_1,\ldots,s_{j-1}}\,\,\sum_{x\in B_{s_1,\ldots,s_{j-1}}}\biggl\|\Lambda x\cd-\varphi\biggl(\sum_{k=1}^{j-1}
\dfrac{s_k\tau_k}{1-p_k}e_k\cd\biggr)\cd\biggr\|_{\lt}^2\\
=\frac{p_j-p_{j-1}}{2^{2N}}\sum_{s_1,\ldots,s_{j-1}}\,\,\sum_{x\in B_{s_1,\ldots,s_{j-1}}}\biggl\|\Lambda\biggl(\sum_{k=1}^{j-1}s_k\tau_ke_k\cd+z(x)\cd\biggr)\\
-\varphi\biggl(\sum_{k=1}^{j-1}
\dfrac{s_k\tau_k}{1-p_k}e_k\cd\biggr)\cd\biggr\|_{\lt}^2\\
=\frac{p_j-p_{j-1}}{2^{2N}}\sum_{s_1,\ldots,s_{j-1}}\,\,\sum_{x\in B_{s_1,\ldots,s_{j-1}}}\biggl\|\Lambda\biggl(\sum_{k=1}^{j-1}s_k\tau_ke_k\cd\biggr)+\Lambda z(x)\cd\\
-\varphi\biggl(\sum_{k=1}^{j-1}
\dfrac{s_k\tau_k}{1-p_k}e_k\cd\biggr)\cd\biggr\|_{\lt}^2\\
=\frac{p_j-p_{j-1}}{2^{2N+1}}\sum_{s_1,\ldots,s_{j-1}}\,\,\sum_{x\in B_{s_1,\ldots,s_{j-1}}}\biggl(\biggl\|\Lambda\biggl(\sum_{k=1}^{j-1}s_k\tau_ke_k\cd\biggr)
+\Lambda z(x)\cd\\
-\varphi\biggl(\sum_{k=1}^{j-1}
\dfrac{s_k\tau_k}{1-p_k}e_k\cd\biggr)\cd\biggr\|_{\lt}^2+
\biggl\|\Lambda\biggl(\sum_{k=1}^{j-1}s_k\tau_ke_k\cd\biggr)-\Lambda z(x)\cd\\
-\varphi\biggl(\sum_{k=1}^{j-1}
\dfrac{s_k\tau_k}{1-p_k}e_k\cd\biggr)\cd\biggr\|_{\lt}^2\biggr)\\
\ge\frac{p_j-p_{j-1}}{2^{2N}}\sum_{s_1,\ldots,s_{j-1}}\,\,\sum_{x\in B_{s_1,\ldots,s_{j-1}}}\|\Lambda z(x)\cd\|_{\lt}^2\\
=\frac{p_j-p_{j-1}}{2^{2N}}\sum_{x\in B}\|\Lambda z(x)\cd\|_{\lt}^2
=(p_j-p_{j-1})\sum_{k=j}^{2N}\mu_k\tau_k^2,
\end{multline*}
where
$$\mu_{2j-1}=\int_{x_{j-1}}^{x_j}|\mu(t)|^2\,dt,\quad\mu_{2j}=
\int_{x_{-j}}^{x_{-j+1}}|\mu(t)|^2\,dt,\quad j=1,\ldots,N.$$
Substituting this estimate into \eqref{ver}, we get
\begin{multline*}
e^2(\Lambda,W,I,\delta,\varphi)\ge\sum_{j=1}^{2N+1}(p_j-p_{j-1})\sum_{k=j}^{2N}\mu_k\tau_k^2\\=
\sum_{j=1}^{2N}\biggl(p_j\sum_{k=j}^{2N}\mu_k\tau_k^2-
p_j\sum_{k=j+1}^{2N}\mu_k\tau_k^2\biggr)
=\sum_{j=1}^{2N}p_j\mu_j\tau_j^2=\sum_{j=1}^{2N}\frac{\delta^2}{\delta^2+\tau_j^2}
\mu_j\tau_j^2.
\end{multline*}
Since the method $\varphi$ was chosen arbitrarily, we have
\begin{equation}\label{Elo}
E^2(\Lambda,W,I,\delta)\ge\sup_{\substack{\tau_1\ge\ldots\ge\tau_{2N}>0\\x_\tau\cd\in W}}
\sum_{j=1}^{2N}\frac{\delta^2}{\delta^2+\tau_j^2}\mu_j\tau_j^2.
\end{equation}

The condition $x_\tau\cd\in W$ means that
$$\iR\nu(t)|x_\tau(t)|^2\,dt=\sum_{j=1}^{2N}\nu_j\tau_j^2\le1,$$
where
$$\nu_{2j-1}=\int_{x_{j-1}}^{x_j}\nu(t)\,dt,\quad\nu_{2j}=
\int_{x_{-j}}^{x_{-j+1}}\nu(t)\,dt,\quad j=1,\ldots,N.$$
Hence,
$$E^2(\Lambda,W,I,\delta)\ge\sup_{\substack{\tau_1\ge\ldots\ge\tau_{2N}>0
\\\sum_{j=1}^{2N}\nu_j\tau_j^2\le1}}
\sum_{j=1}^{2N}\frac{\delta^2}{\delta^2+\tau_j^2}\mu_j\tau_j^2.$$

Let $\tau=(\tau_1,\ldots,\tau_k,0,\ldots,0)$, $1\le k<2N$, $\tau_1\ge\ldots\ge\tau_k>0$, and
$$\sum_{j=1}^k\nu_j\tau_j^2\le1.$$
For sufficiently small $\varepsilon>0$ we put
$\tau_\varepsilon=(\tau_1(\varepsilon),\ldots,\tau_{2N}(\varepsilon))$ where
$$\tau_j(\varepsilon)=\begin{cases}\sqrt{\tau_j^2-\varepsilon},&1\le j\le k,\\
C\sqrt{\varepsilon},&k+1\le j\le2N,\end{cases}$$
and
$$C=\left(\frac{\sum_{j=1}^k\nu_j}{\sum_{j=k+1}^{2N}\nu_j}\right)^{1/2}.$$
Then
$$\sum_{j=1}^{2N}\nu_j\tau_j^2(\varepsilon)=\sum_{j=1}^k\nu_j\tau_j^2-\varepsilon\sum_{j=1}^k\nu_j+
C^2\varepsilon\sum_{j=k+1}^{2N}\nu_j=\sum_{j=1}^k\nu_j\tau_j^2\le1.$$
For $\varepsilon<\tau_k^2/(1+C^2)$ we have
$$\sqrt{\tau_k^2-\varepsilon}>C\sqrt\varepsilon.$$
Consequently, for such $\varepsilon$
$$\tau_1(\varepsilon)\ge\ldots\ge\tau_{2N}(\varepsilon)>0.$$
It follows from \eqref{Elo} that
$$E^2(\Lambda,W,I,\delta)\ge\sum_{j=1}^{2N}\frac{\delta^2}{\delta^2+\tau_j^2(\varepsilon)}
\mu_j\tau_j^2(\varepsilon).$$
Passing to the limit as $\varepsilon\to0$, we obtain
$$E^2(\Lambda,W,I,\delta)\ge\sum_{j=1}^k\frac{\delta^2}{\delta^2+\tau_j^2}
\mu_j\tau_j^2.$$
Thus,
\begin{equation}\label{Elo1}
E^2(\Lambda,W,I,\delta)\ge\sup_{\substack{\tau_1\ge\ldots\ge\tau_{2N}\ge0
\\\sum_{j=1}^{2N}\nu_j\tau_j^2\le1}}
\sum_{j=1}^{2N}\frac{\delta^2}{\delta^2+\tau_j^2}\mu_j\tau_j^2.
\end{equation}

Let the piecewise continuous function $x\cd$ be such that $|x\cd|$ is an even function monotonically decreasing on $\mathbb R_+$ and
\begin{equation}\label{nu}
\iR\nu(t)|x(t)|^2\,dt<1.
\end{equation}
Consider the integral
$$I=\iR\frac{\delta^2}{\delta^2+|x(t)|^2}|\mu(t)|^2|x(t)|^2\,dt.$$
Let us fix $\varepsilon>0$ and find $A>0$ such that
\begin{equation}\label{I1}
I_1=\int_{-A}^A\frac{\delta^2}{\delta^2+|x(t)|^2}|\mu(t)|^2|x(t)|^2\,dt>I-\varepsilon.
\end{equation}
It is obvious that
$$I_2=\int_{-A}^A\nu(t)|x(t)|^2\,dt<1.$$
We will approximate these integrals by integral sums over partitions of $T_N$, representing segments $[x_{j-1},x_j]$, $[x_{-j},x_{-j+1}]$, $j=1,\ldots,N$, and points $t_{2j-1}=\dfrac{2j-1}{2N}$, $t_{2j}=-\dfrac{2j-1}{2N}$, $j=1,\ldots,2N$.

For any $\varepsilon_1>0$, there is such a $N_1$ that for all $N>N_1$
\begin{equation}\label{frI}
\frac1N\sum_{j=1}^{2N}\frac{\delta^2}{\delta^2+\tau_j^2}|\mu(t_j)|^2\tau_j^2
>I_1-\varepsilon_1,
\end{equation}
where $\tau_j=|x(t_j)|$. Moreover, there is such a $N_2$ that for all $N>N_2$ for some $\omega>0$ the inequality
$$\frac1N\sum_{j=1}^{2N}\nu(t_j)\tau_j^2<1-\omega$$
holds.

By the mean value theorem for integrals, there are $\xi_j$ such that $\xi_{2j-1}\in[x_{j-1},x_j]$, $\xi_{2j}\in[x_{-j},x_{-j+1}]$, $j=1,\ldots,N$, and $\mu_j=|\mu(\xi_j)|^2/N$. Using the same arguments we obtain that there are $\eta_j$ such that $\eta_{2j-1}\in[x_{j-1},x_j]$, $\eta_{2j}\in[x_{-j},x_{-j+1}]$, $j=1,\ldots,N$, and $\nu_j=\nu(\eta_j)/N$. Due to the uniform continuity of the functions $|\mu\cd|$ and $\nu\cd$ on the segment $[-A,A]$ for any $\varepsilon_2>0$, there is $N_3$ such that for all $s_1,s_2\in[-A,A]$, $|s_1-s_2|<1/N_3$, inequalities
\begin{equation}\label{mn}
||\mu(s_1)|^2-|\mu(s_2)|^2|<\varepsilon_2,\quad|\nu(s_1)-\nu(s_2)|<\varepsilon_2
\end{equation}
hold.

Put
$$M=\vraisup_{t\in[0,1]}|x(t)|^2$$
(due to the monotonous decrease of $|x\cd|$ on $\mathbb R_+$, instead of the segment $[0,1]$, we may take any segment $[0,b]$, $b>0$). Choose $\varepsilon_2<\omega/(2M)$. Let $N>\max\{N_1,N_2,N_3\}$.
Then $\nu_j=\nu(\eta_j)/N<\nu(t_j)/N+\varepsilon_2/N$. Consequently,
\begin{equation}\label{ed}
\sum_{j=1}^{2N}\nu_j\tau_j^2\le\frac1N\sum_{j=1}^{2N}(\nu(t_j)+\varepsilon_2)\tau_j^2
\le\frac1N\sum_{j=1}^{2N}\nu(t_j)\tau_j^2+2M\varepsilon_2<1.
\end{equation}
It follows from \eqref{mn} that $\mu_j=|\mu(\xi_j)|^2/N>|\mu(t_j)|^2/N
-\varepsilon_2/N$. Therefore,
\begin{multline*}
\sum_{j=1}^{2N}\frac{\delta^2}{\delta^2+\tau_j^2}\mu_j\tau_j^2\ge
\frac1N\sum_{j=1}^{2N}\frac{\delta^2}{\delta^2+\tau_j^2}(|\mu(t_j)|^2
-\varepsilon_2)\tau_j^2\\
\ge\frac1N\sum_{j=1}^{2N}\frac{\delta^2}{\delta^2+\tau_j^2}|\mu(t_j)|^2
\tau_j^2-2M\varepsilon_2.
\end{multline*}
Taking into account \eqref{ed}, \eqref{frI}, and \eqref{I1}, it follows from \eqref{Elo} that
\begin{multline*}
E^2(\Lambda,W,I,\delta)\ge\sum_{j=1}^{2N}\frac{\delta^2}{\delta^2+\tau_j^2}\mu_j\tau_j^2\ge
\frac1N\sum_{j=1}^{2N}\frac{\delta^2}{\delta^2+\tau_j^2}|\mu(t_j)|^2
\tau_j^2-2M\varepsilon_2\\
\ge I_1-\varepsilon_1-2M\varepsilon_2\ge I-\varepsilon-\varepsilon_1-2M\varepsilon_2.
\end{multline*}
Due to the fact that $\varepsilon$, $\varepsilon_1$ and $\varepsilon_2$ can be chosen arbitrarily small, we get
\begin{equation}\label{Em}
E^2(\Lambda,W,I,\delta)\ge\sup_{\substack{x\cd\in W_0\\\iR\nu(t)|x(t)|^2\,dt<1}}\iR\frac{\delta^2}{\delta^2+|x(t)|^2}|\mu(t)|^2|x(t)|^2\,dt,
\end{equation}
where $W_0$ is the set of piecewise continuous functions $x\cd$ such that $|x\cd|$ is an even function monotonically decreasing on $\mathbb R_+$.

We show that the strict inequality on the right side of \eqref{Em} can be replaced by a non-strict one. Let $x\cd\in W_0$ and
$$\iR\nu(t)|x(t)|^2\,dt=1.$$
Consider the function $y\cd=(1+\varepsilon)^{-1/2}x\cd$, $\varepsilon>0$. Then it follows from \eqref{Em} that
$$E^2(\Lambda,W,I,\delta)\ge\frac1{1+\varepsilon}\iR\frac{\delta^2}{\delta^2+\dfrac{|x(t)|^2}
{1+\varepsilon}}|\mu(t)|^2|x(t)|^2\,dt.$$
Since
$$\frac1{\delta^2+\dfrac{|x(t)|^2}{1+\varepsilon}}\ge\frac1{\delta^2+|x(t)|^2},$$
we have
$$E^2(\Lambda,W,I,\delta)\ge\frac1{1+\varepsilon}\iR\frac{\delta^2}{\delta^2+|x(t)|^2}
|\mu(t)|^2|x(t)|^2\,dt.$$
Passing $\varepsilon$ to zero, we get
$$E^2(\Lambda,W,I,\delta)\ge\iR\frac{\delta^2}{\delta^2+|x(t)|^2}|\mu(t)|^2|x(t)|^2\,dt.$$
Thus,
\begin{equation}\label{El}
E^2(\Lambda,W,I,\delta)\ge\sup_{\substack{x\cd\in W_0\\\iR\nu(t)|x(t)|^2\,dt\le1}}\iR\frac{\delta^2}{\delta^2+|x(t)|^2}|\mu(t)|^2|x(t)|^2\,dt.
\end{equation}

2. The upper bound. Let us find the error of the methods having the form
$$\varphi(y_\xi\cd)\cd=\alpha\cd\mu\cd y_\xi\cd.$$
Put $z_\xi\cd=y_\xi\cd-x\cd$. Then $\mathbb Mz_\xi\cd=0$, $\mathbb Dz_\xi\cd\le\delta^2$. We have
\begin{multline*}
e^2(\Lambda,W,I,\delta,\varphi)=\sup_{\substack{x\cd\in W\\y_\xi\cd\in Y_\delta(x\cd)}}\mathbb M\left(\|\Lambda x\cd-\varphi(y_\xi\cd)\cd\|^2_{\lt}\right)\\
=\sup_{\substack{x\cd\in W\\y_\xi\cd\in Y_\delta(x\cd)}}\mathbb M\left(\|\Lambda x\cd-\varphi(x\cd)\cd-\varphi(z_\xi\cd)\cd\|^2_{\lt}\right)\\
=\sup_{\substack{x\cd\in W\\y_\xi\cd\in Y_\delta(x\cd)}}\left(\|\Lambda x\cd-\varphi(x\cd)\cd\|^2_{\lt}+\mathbb M(\|\varphi(z_\xi\cd)\cd\|^2_{\lt})\right.\\
\left.-2\mathbb M\RE(\varphi(z_\xi\cd)\cd,\ov{\Lambda x\cd-\varphi(x\cd)\cd})\right);
\end{multline*}
here $(\cdot,\cdot)$ is the standard scalar product in $\lt$. It follows from the form of $\varphi$ that
\begin{multline*}
\mathbb M\RE(\varphi(z_\xi\cd)\cd,\Lambda x\cd-\varphi(x\cd)\cd)\\
=\RE\mathbb M\left(\alpha\cd\mu\cd z_\xi\cd,\ov{\Lambda x\cd-\varphi(x\cd)\cd}\right)\\
=\RE\left(\alpha\cd\mu\cd\mathbb Mz_\xi\cd,\ov{\Lambda x\cd-\varphi(x\cd)\cd}\right)=0.
\end{multline*}
Due to the fact that $\mathbb M(|z_\xi\cd|^2)=\mathbb D(y_\xi\cd)$, we have
\begin{multline*}
e^2(\Lambda,W,I,\delta,\varphi)=\sup_{\substack{x\cd\in W\\y_\xi\cd\in Y_\delta(x\cd)}}
\left(\iR|\mu(t)|^2|1-\alpha(t)|^2|x(t)|^2\,dt\right.\\
\left.+\iR|\mu(t)|^2|\alpha(t)|^2\mathbb D(y_\xi(t))\,dt\right)=
\sup_{x\cd\in W}\iR|\mu(t)|^2|1-\alpha(t)|^2|x(t)|^2\,dt\\
+\delta^2\iR|\mu(t)|^2|\alpha(t)|^2\,dt
\end{multline*}
Since
\begin{multline*}
\iR|\mu(t)|^2|1-\alpha(t)|^2|x(t)|^2\,dt=\iR\frac{|\mu(t)|^2}{\nu(t)}|1-\alpha(t)|^2
\nu(t)|x(t)|^2\,dt\\
\le\vraisup_{t\in\mathbb R}\left(\frac{|\mu(t)|^2}{\nu(t)}|1-\alpha(t)|^2\right),
\end{multline*}
we obtain
$$e^2(\Lambda,W,I,\delta,\varphi)\le\vraisup_{t\in\mathbb R}\left(\frac{|\mu(t)|^2}{\nu(t)}|1-\alpha(t)|^2\right)+\delta^2\iR|\mu(t)|^2|\alpha(t)|^2\,dt.$$

Put
$$\alpha(t)=\left(1-\dfrac{\sqrt{\nu(t)}}{|\mu(t)|}\frac{|\mu(t_\delta)|}{\sqrt{\nu(t_\delta)}}
\right)_+.$$
Due to the monotonous decreasing of the function $|\mu\cd|/\sqrt{\nu\cd}$ we get
$$\vraisup_{t\in\mathbb R}\left(\frac{|\mu(t)|^2}{\nu(t)}|1-\alpha(t)|^2\right)
=\frac{|\mu(t_\delta)|^2}{\nu(t_\delta)}.$$
Consequently,
\begin{multline*}
e^2(\Lambda,W,I,\delta,\varphi)\le\frac{|\mu(t_\delta)|^2}{\nu(t_\delta)}\\+\delta^2\int_{|t|\le t_\delta}|\mu(t)|^2\biggl(1-\dfrac{\sqrt{\nu(t)}}{|\mu(t)|}\frac{|\mu(t_\delta)|}
{\sqrt{\nu(t_\delta)}}\biggr)^2\,dt
=\frac{|\mu(t_\delta)|^2}{\nu(t_\delta)}\\+\delta^2\int_{|t|\le t_\delta}|\mu(t)|^2\biggl(\biggl(1-\dfrac{\sqrt{\nu(t)}}{|\mu(t)|}\frac{|\mu(t_\delta)|}
{\sqrt{\nu(t_\delta)}}\biggr)-\dfrac{\sqrt{\nu(t)}}{|\mu(t)|}\frac{|\mu(t_\delta)|}
{\sqrt{\nu(t_\delta)}}\\
+\dfrac{\nu(t)}{|\mu(t)|^2}\frac{|\mu(t_\delta)|^2}
{\nu(t_\delta)}\biggr)\,dt=\delta^2\int_{|t|\le t_\delta}|\mu(t)|^2\biggl(1-\dfrac{\sqrt{\nu(t)}}{|\mu(t)|}\frac{|\mu(t_\delta)|}
{\sqrt{\nu(t_\delta)}}\biggr)\,dt\\
+\frac{|\mu(t_\delta)|}{\sqrt{\nu(t_\delta)}}\biggl(\frac{|\mu(t_\delta)|}{\sqrt{\nu(t_\delta)}}
\biggl(1+\delta^2\int_{|t|\le t_\delta}\nu(t)\,dt\biggr)-\delta^2\int_{|t|\le t_\delta}|\mu(t)|\sqrt{\nu(t)}\,dt\biggr).
\end{multline*}
It follows from the definition of $t_\delta$ that
$$\frac{|\mu(t_\delta)|}{\sqrt{\nu(t_\delta)}}
\biggl(1+\delta^2\int_{|t|\le t_\delta}\nu(t)\,dt\biggr)-\delta^2\int_{|t|\le t_\delta}|\mu(t)|\sqrt{\nu(t)}\,dt=0.$$
Thus,
\begin{equation}\label{eb}
e^2(\Lambda,W,I,\delta,\varphi)\le\delta^2\int_{|t|\le t_\delta}|\mu(t)|^2\biggl(1-\dfrac{\sqrt{\nu(t)}}{|\mu(t)|}\frac{|\mu(t_\delta)|}
{\sqrt{\nu(t_\delta)}}\biggr)\,dt.
\end{equation}

Consider the function
$$\wx(t)=\delta\biggl(\biggl(\frac{\sqrt{\nu(t_\delta)}}{|\mu(t_\delta)|}\frac{|\mu(t)|}
{\sqrt{\nu(t)}}-1\biggr)_+\biggr)^{1/2}.$$
It is obvious that $\wx\cd\in W_0$. Moreover,
$$\iR\nu(t)|\wx(t)|^2\,dt=\delta^2\int_{|t|\le t_\delta}\nu(t)\biggl(\frac{\sqrt{\nu(t_\delta)}}{|\mu(t_\delta)|}\frac{|\mu(t)|}
{\sqrt{\nu(t)}}-1\biggr)\,dt=1.$$
Taking into account \eqref{eb}, it follows from \eqref{El} that
\begin{multline*}
E^2(\Lambda,W,I,\delta)\ge\iR\frac{\delta^2}{\delta^2+|\wx(t)|^2}|\mu(t)|^2|\wx(t)|^2\,dt\\
=\delta^2\int_{|t|\le t_\delta}|\mu(t)|^2\biggl(1-\dfrac{\sqrt{\nu(t)}}{|\mu(t)|}\frac{|\mu(t_\delta)|}
{\sqrt{\nu(t_\delta)}}\biggr)\,dt\\
\ge e^2(\Lambda,W,I,\delta,\varphi)\ge E^2(\Lambda,W,I,\delta).
\end{multline*}
This implies \eqref{TE} and the optimality of the method $\varphi$.
\end{proof}

Now we consider some examples of the application of Theorem~\ref{T1}.

\section[Recovery of functions and their derivatives]{Recovery of functions and their derivatives from the Fourier transform given with random error}

Denote by $\mathcal W_2^r(\mathbb R)$ the set of functions $x\cd\in\lt$ for which
$x^{(r-1)}\cd$ is locally absolutely continuous and $x^{(r)}\cd\in\lt$. Put
$$\Wr=\{\,x\cd\in\mathcal W_2^r(\mathbb R):\|x^{(r)}\cd\|_{\lt}\le1\,\}.$$
Suppose that the Fourier transform $Fx\cd$ of the function $x\cd\in\Wr$ is given with a random error. We assume that instead of the function $Fx\cd$ we know a random function $y_\xi\cd\in\lt$ such that $\mathbb My_\xi\cd=Fx\cd$ and $\mathbb Dy_\xi\cd\le\delta$. Using this information, it is required to recover the function $x^{(k)}\cd$, $0\le k<r$, in the $\lt$-metric.

The exact setting of the problem is as follows. For every $x\cd\in\Wr$ we consider the set of random functions
\begin{equation}\label{Yd}
Y_\delta(x\cd)=\{\,y_\xi\cd\in\lt:\mathbb My_\xi\cd=Fx\cd,\ \mathbb Dy_\xi\cd\le\delta\,\}.
\end{equation}
This set requires additional conditions given in the general setting (the validity of the equalities \eqref{eq1} and \eqref{eq2}). Next, we define the error of the recovery method $\varphi\colon\lt\to\lt$ as follows
$$e(D^k,\Wr,\delta,\varphi)
=\left(\sup_{\substack{x\cd\in\Wr\\y_\xi\cd\in Y_\delta(x\cd)}}\mathbb M\left(\|x^{(k)}\cd-\varphi(y_\xi\cd)\cd\|^2_{\lt}\right)\right)^{1/2}.$$
The problem is to find the error of optimal recovery
$$E(D^k,\Wr,\delta)=\inf_{\varphi\colon\lt\to\lt}e(D^k,\Wr,\delta,\varphi)$$
and a method on which this infimum is attained.

It follows from the Parseval equality that
\begin{gather*}
\|x^{(r)}\cd\|_{\lt}^2=\frac1{2\pi}\iR|t|^{2r}|Fx(t)|^2\,dt,\\
\|x^{(k)}\cd-\varphi(y_\xi\cd)\cd\|^2_{\lt}=\frac1{2\pi}\iR|(it)^kF(t)-
F\varphi(y_\xi\cd)(t)|^2\,dt.
\end{gather*}
Thus, the problem is reduced to problem \eqref{bas} with $\nu(t)=t^{2r}$ and $\mu(t)=(it)^k$.
The function $f\cd$ which was defined by \eqref{fs}, has the form
$$f(s)=\int_{|t|\le s}\left(\frac{s^{r-k}}{|t|^{r-k}}-
1\right)t^{2r}\,dt=2s^{2r+1}\frac{r-k}{(2r+1)(r+k+1)}.$$
The equation $f(s)=\delta^{-2}$ has the unique solution
$$t_\delta=\left(\frac{(2r+1)(r+k+1)}{2\delta^2(r-k)}\right)^{\frac1{2r+1}}.$$
It follows from Theorem~\ref{T1}

\begin{theorem}
For all $\delta>0$ and $0\le k<r$
$$E(D^k,\Wr,\delta)=\frac{(2r+1)^{\frac{2k+1}{2(2r+1)}}}{\sqrt{2k+1}}\left(2\delta^2
\frac{r-k}{r+k+1}\right)^{\frac{r-k}{2(2r+1)}}.$$
Moreover, the method
$$\varphi(y_\xi\cd)(t)=F^{-1}\left((it)^k\alpha(t)y_\xi(t)\right)(t),$$
where
$$\alpha(t)=\left(1-t^{r-k}\left(\frac{2\delta^2(r-k)}{(2r+1)(r+k+1)}\right)^{\frac{r-k}{2r+1}}
\right)_+,$$
is optimal.
\end{theorem}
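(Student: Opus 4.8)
The plan is to obtain this as a direct specialization of Theorem~\ref{T1}. By the Parseval equality the constraint $\|x^{(r)}\cd\|_{\lt}\le1$ becomes $\frac1{2\pi}\iR|t|^{2r}|Fx(t)|^2\,dt\le1$, while $\|x^{(k)}\cd-\varphi(y_\xi\cd)\cd\|_{\lt}^2=\frac1{2\pi}\iR|(it)^kFx(t)-F\varphi(y_\xi\cd)(t)|^2\,dt$. Hence, after passing to the Fourier transforms (with the standard rescaling that absorbs the factor $2\pi$ into the normalisation of the information function and of the noise level), the problem takes the form \eqref{bas} for the multiplier operator $\Lambda x\cd=\mu\cd x\cd$ with $\nu(t)=t^{2r}$ and $\mu(t)=(it)^k$, the role of the random data being played by $Fx\cd$; the admissible families \eqref{Yd} are assumed to satisfy the integration-interchange conditions \eqref{eq1}--\eqref{eq2}, exactly as in the general setting.

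First I would check that this choice of $\nu\cd$ and $\mu\cd$ meets the standing assumptions of Theorem~\ref{T1}: $|\mu(t)|=|t|^k$ and $\nu(t)=t^{2r}$ are even, $|\mu(t)|>0$ almost everywhere, and $|\mu\cd|/\sqrt{\nu\cd}=|t|^{-(r-k)}$ is strictly decreasing on $\mathbb R_+$ precisely because $k<r$ --- this is the one place where the hypothesis $k<r$ is essential. The function $f\cd$ from \eqref{fs} then reduces, by an elementary integration, to $f(s)=2s^{2r+1}(r-k)/((2r+1)(r+k+1))$, which tends to $+\infty$, so $f(s)=\delta^{-2}$ has the unique root $t_\delta=\bigl((2r+1)(r+k+1)/(2\delta^2(r-k))\bigr)^{1/(2r+1)}$. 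Thus Theorem~\ref{T1} applies and gives both the value \eqref{TE} and the optimal method in ``$\mu,\nu$'' form.

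It then remains to compute. Substituting $|\mu(t)|^2=t^{2k}$, $\sqrt{\nu(t)}/|\mu(t)|=|t|^{r-k}$ and $|\mu(t_\delta)|/\sqrt{\nu(t_\delta)}=t_\delta^{-(r-k)}$ into \eqref{TE} and integrating the two resulting power functions over $[-t_\delta,t_\delta]$ gives $E^2$ equal to $\delta^2$ times $2t_\delta^{2k+1}(r-k)/((2k+1)(r+k+1))$; inserting the value of $t_\delta$ and collecting exponents yields the closed-form expression of the statement. For the optimal method, the multiplier $\bigl(1-\sqrt{\nu(t)}\,|\mu(t_\delta)|/(|\mu(t)|\sqrt{\nu(t_\delta)})\bigr)_+$ of Theorem~\ref{T1} becomes $\bigl(1-|t|^{r-k}t_\delta^{-(r-k)}\bigr)_+$, and substituting the value of $t_\delta^{r-k}$ turns this into the stated $\alpha(t)$; multiplication of the data by $\mu(t)=(it)^k$ in the Fourier variable corresponds, after the inverse transform, to the operator $y_\xi\cd\mapsto F^{-1}\bigl((it)^k\alpha(t)y_\xi(t)\bigr)$, which is exactly the asserted method.

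The argument is essentially bookkeeping; there is no substantial obstacle beyond two points deserving care. The first is the careful tracking of the $2\pi$-normalisation through the Parseval reduction, so that the bound $\mathbb Dy_\xi\cd\le\delta$ enters Theorem~\ref{T1} with the correct constant and the final numerical factor comes out right. The second is simply recalling, before invoking Theorem~\ref{T1}, that the class $\Wr$ together with the families \eqref{Yd} fits the general framework, i.e.\ that the evenness, positivity and monotonicity hypotheses and the conditions \eqref{eq1}--\eqref{eq2} all hold; once these are in place, the rest is the routine computation just described.
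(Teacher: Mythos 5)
Your proposal follows exactly the paper's route: reduce via Parseval to the general setting with $\nu(t)=t^{2r}$ and $\mu(t)=(it)^k$, check the evenness/monotonicity hypotheses (where $k<r$ enters), compute $f(s)$ and $t_\delta$, and substitute into Theorem~\ref{T1}; the verification of \eqref{eq1}--\eqref{eq2} and the $2\pi$-normalisation are handled (or glossed over) in the same way in both. One substantive remark: your intermediate result $E^2=2\delta^2 t_\delta^{2k+1}(r-k)/((2k+1)(r+k+1))$ is correct, but inserting $t_\delta^{2r+1}=(2r+1)(r+k+1)/(2\delta^2(r-k))$ yields the exponent $\tfrac{r-k}{2r+1}$ on the factor $2\delta^2(r-k)/(r+k+1)$, not $\tfrac{r-k}{2(2r+1)}$ as printed (for $r=1$, $k=0$ one gets $E=3^{1/6}\delta^{2/3}$, whereas the displayed formula gives $3^{1/6}\delta^{1/3}$), so your claim that ``collecting exponents yields the closed-form expression of the statement'' should instead read that it surfaces an apparent misprint in that closed form.
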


Note that the optimal recovery method does not use all the information about random functions $y_\xi\cd$, but only the information contained in the segment $[-t_\delta,t_\delta]$. Moreover, the more accurate the measurements (the smaller the variance $\delta^2$), the larger this segment becomes.

The deterministic case of this problem was considered in \cite{Most} (see also \cite{Most1}). 

\section{Recovery of the solution of the heat equation}

The temperature distribution in an infinite rod is described by the equation
$$\frac{\partial u}{\partial t}=\frac{\partial^2u}{\partial t^2},$$
where $u(\cdot,\cdot)$ is the function on $[0,\infty)\times\mathbb R$) with
a given initial temperature distribution
$$u(0,\cdot)=u_0\cd.$$

Consider the problem of recovering the temperature distribution at an instant of time $T$ from information about the Fourier transform of the initial temperature distribution $u_0\cd$, given with a random error. We assume that the functions $u_0\cd$, given the initial temperature distribution, belong to the class $\Wr$. We are again considering a set of random functions similar to the set of $\eqref{Yd}$
$$Y_\delta(u_0\cd)=\{\,y_\xi\cd\in\lt:\mathbb My_\xi\cd=Fu_0\cd,\ \mathbb Dy_\xi\cd\le\delta\,\}$$
with the same additional conditions regarding the change in the order of integration.
We define the error of the recovery method $\varphi\colon\lt\to\lt$ as follows
$$e(T,\Wr,\delta,\varphi)
=\left(\sup_{\substack{u_0\cd\in\Wr\\y_\xi\cd\in Y_\delta(u_0\cd)}}\mathbb M\left(\|u(T,\cdot)-\varphi(y_\xi\cd)\cd\|^2_{\lt}\right)\right)^{1/2}.$$
The problem is to find the error of optimal recovery
$$E(T,\Wr,\delta)=\inf_{\varphi\colon\lt\to\lt}e(T,\Wr,\delta,\varphi)$$
and a method on which this infimum is attained.

It is well known (see, for example, \cite{KF}) that for all $t\ge0$ the equality
$$F(u(t,\cdot))(\lambda)=e^{-\lambda^2t}Fu_0(\lambda)$$
holds. It follows from the Parseval equality that
$$\|u(T,\cdot)-\varphi(y_\xi\cd)\cd\|^2_{\lt}=\frac1{2\pi}\iR|e^{-\lambda^2T}Fu_0(\lambda)-
F\varphi(y_\xi\cd)(\lambda)|^2\,d\lambda.$$
Thus, the problem is reduced to problem \eqref{bas} with $\nu(t)=t^{2r}$ and $\mu(t)=e^{-t^2T}$.
The function $f\cd$ which was defined by \eqref{fs}, has the form
$$f(s)=\int_{|t|\le s}\left(\frac{s^re^{s^2T}}{t^re^{t^2T}}-
1\right)t^{2r}\,dt=2s^re^{s^2T}\int_0^st^re^{-t^2T}\,dt-\frac{s^{2r+1}}{2r+1}.$$
It is easy to verify that $f(s)\to+\infty$ as $s\to+\infty$ (the monotonous increase of $f\cd$ was noted in the general case). Therefore, the equation $f(s)=\delta^{-2}$ has a unique solution, which we denote by $t_\delta$.

From Theorem~\ref{T1} we obtain the following result:

\begin{theorem}
Put
$$\alpha(t)=\left(1-\frac{t^re^{t^2T}}{t_\delta^re^{t_\delta^2T}}\right)_+.$$
Then the equality
$$E(T,\Wr,,\delta)=\delta\biggl(\int_{|t|\le t_\delta}e^{-2t^2T}\alpha(t)\,dt\biggr)^{1/2}$$
holds. Moreover, the method
$$\varphi(y_\xi\cd)(t)=F^{-1}\left(e^{-t^2T}\alpha(t)y_\xi(t)\right)(t)$$
is optimal.
\end{theorem}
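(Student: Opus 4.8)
The plan is to apply Theorem~\ref{T1} directly with the identifications $\nu(t)=t^{2r}$ and $\mu(t)=e^{-t^2T}$, which the preceding discussion has already set up. First I would verify that the hypotheses of Theorem~\ref{T1} are met in this setting: $\nu\cd$ and $|\mu\cd|=e^{-t^2T}$ are even, $|\mu(t)|>0$ everywhere, and the ratio $|\mu(t)|/\sqrt{\nu(t)}=e^{-t^2T}/|t|^r$ is monotonically decreasing on $\mathbb R_+$ (the numerator decreases, the denominator increases). The function $f\cd$ defined by \eqref{fs} has the explicit form already recorded in the excerpt, and since $f\cd$ is monotonically increasing with $f(s)\to+\infty$ as $s\to+\infty$, the equation $f(s)=\delta^{-2}$ has the unique solution $t_\delta$. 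Thus all structural assumptions of Theorem~\ref{T1} hold.

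Next I would simply substitute into formula \eqref{TE}. With $\mu(t)=e^{-t^2T}$ we have $|\mu(t)|^2=e^{-2t^2T}$, and
$$1-\frac{\sqrt{\nu(t)}}{|\mu(t)|}\frac{|\mu(t_\delta)|}{\sqrt{\nu(t_\delta)}}
=1-\frac{|t|^r e^{t^2T}}{t_\delta^r e^{t_\delta^2T}}
=1-\frac{t^r e^{t^2T}}{t_\delta^r e^{t_\delta^2T}},$$
the last equality being valid on the integration range $|t|\le t_\delta$ where (using evenness) we may write $|t|^r=t^r$ after the standard interpretation, or more carefully keep $|t|$ throughout and note the integrand is even. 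Within the integral, because the bracket is nonnegative for $|t|\le t_\delta$, it coincides with $\alpha(t)=\bigl(1-t^r e^{t^2T}/(t_\delta^r e^{t_\delta^2T})\bigr)_+$. Hence \eqref{TE} gives exactly
$$E(T,\Wr,\delta)=\delta\biggl(\int_{|t|\le t_\delta}e^{-2t^2T}\alpha(t)\,dt\biggr)^{1/2}.$$

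For the optimal method, Theorem~\ref{T1} asserts that $\varphi(y_\xi\cd)(t)=\alpha(t)\mu(t)y_\xi(t)$ is optimal in the reduced problem \eqref{bas}, where now the ``variable'' is the Fourier transform. Translating back through the Fourier transform and Parseval's equality — exactly as was done in the derivative-recovery example — the operator $\Lambda$ corresponds to multiplication by $\mu(t)=e^{-t^2T}$ on the Fourier side (this is the relation $F(u(T,\cdot))(\lambda)=e^{-\lambda^2 T}Fu_0(\lambda)$), so the recovered function in the original metric is obtained by applying $F^{-1}$ to $e^{-t^2T}\alpha(t)y_\xi(t)$, yielding the stated method $\varphi(y_\xi\cd)(t)=F^{-1}\bigl(e^{-t^2T}\alpha(t)y_\xi(t)\bigr)(t)$.

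I do not expect any genuine obstacle here: the theorem is a direct corollary of Theorem~\ref{T1}. The only point requiring minor care is confirming that $f(s)\to+\infty$, which the excerpt already notes follows from monotonicity together with the explicit formula $f(s)=2s^r e^{s^2T}\int_0^s t^r e^{-t^2T}\,dt-s^{2r+1}/(2r+1)$; one checks that the first term grows faster than the subtracted polynomial, for instance by bounding $\int_0^s t^r e^{-t^2T}\,dt$ below by its tail near $s$ after a fixed point, or simply by invoking that $f$ is increasing and unbounded because the integrand in \eqref{fs} is eventually bounded below by a positive constant times $t^{2r}$ on a set of growing measure. Everything else is pure substitution, so the bulk of the proof is a single invocation of Theorem~\ref{T1} followed by the bookkeeping of the Parseval reduction.
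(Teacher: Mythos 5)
Your proposal is correct and matches the paper exactly: the paper itself presents this theorem as an immediate consequence of Theorem~\ref{T1} with $\nu(t)=t^{2r}$ and $\mu(t)=e^{-t^2T}$, after the Parseval reduction $F(u(T,\cdot))(\lambda)=e^{-\lambda^2T}Fu_0(\lambda)$, and offers no further argument. Your verification of the hypotheses (monotonicity of $e^{-t^2T}/|t|^r$, growth of $f$) and the substitution into \eqref{TE} are exactly the bookkeeping the paper leaves implicit.
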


The deterministic case of this problem was considered in \cite{Most2}.

\end{document}